\documentclass[a4paper,11pt]{amsart}

\pdfoutput=1

\usepackage[T1]{fontenc}
\usepackage{amssymb,amsfonts,amsmath,amsthm}
\usepackage{url}
\usepackage{color}
\usepackage{enumerate}
\usepackage{pinlabel}

\usepackage{mathabx}

\usepackage[top=1.2in,bottom=1.4in,left=1.4in,right=1.4in]{geometry}

\input{xy}
\xyoption{all}
\objectmargin={3mm}

\linespread{1}



\newcounter{notes}
%



\newcommand{\ignore}[1]{}



\newtheorem{theorem}{Theorem}

\newtheorem{lemma}[theorem]{Lemma}

\theoremstyle{definition}

\newtheoremstyle{theoremwithref}{}{}{\itshape}{}{\bfseries}{.}{.5em}{#1 #2 #3}
\theoremstyle{theoremwithref}

\setcounter{tocdepth}{1}

\newcommand{\C}{\mathbb{C}}
\newcommand{\R}{\mathbb{R}}

\newcommand{\SL}{\mathrm{SL}}

\newcommand{\Hit}{\mathrm{Hit}}
\newcommand{\HIT}{\mathrm{HIT}}
\newcommand{\Hom}{\mathrm{Hom}}



\title{Most Hitchin representations are strongly dense}

\author{D. D.  Long}
\address{Department of Mathematics\\
University of California\\ Santa Barbara, CA 93106, USA.}
\email{long@math.ucsb.edu}

\author{A. W. Reid}
\address{Department of Mathematics\\
Rice University \\ Houston, TX 77005, USA.}
\address{Max-Planck-Insititut f\"ur Mathematik\\
Vivatsgasse 7, D-53111 Bonn, Germany}
\email{alan.reid@rice.edu, areid@mpim-bonn.mpg.de}

\author{M. Wolff}
\address{Sorbonne Universit\'es, UPMC Univ.\ Paris 06, Institut de Math\'ematiques
de Jussieu-Paris Rive Gauche, UMR 7586, CNRS, Univ. Paris Diderot, Sorbonne
Paris Cit\'e, 75005 Paris, France}
\email{maxime.wolff@imj-prg.fr}

\begin{document}

\maketitle
\numberwithin{theorem}{section}

\begin{abstract}
 We prove that generic Hitchin
  representations are strongly dense: every pair of
  non commuting elements in their image generate a
  Zariski-dense subgroup of $\SL_n(\R)$. The proof uses 
  a theorem of Rapinchuk, Benyash-Krivetz and
  Chernousov, to show that the set of Hitchin
  representations is Zariski-dense in the variety of
  representations of a surface group in $\SL_n(\R)$.
\vspace{0.2cm}

\end{abstract}

\sloppy
\section{Introduction}
\label{intro}

Following Breuillard, Green, Guralnick and Tao
\cite{BGGT}, we say that
a subgroup $\Gamma\subset\SL_n(\R)$ is {\em strongly dense} if any
pair of non-commuting elements of $\Gamma$ generate a
Zariski-dense subgroup of $\SL_n(\R)$. They proved that, among
many other semisimple algebraic groups, the group $\SL_n(\R)$
contains a strongly dense non abelian free
subgroup~\cite[Theorem 4.5]{BGGT}. In this note, we extend the  Breuillard, Green, Guralnick and Tao result to
certain (discrete and) faithful representations of surface groups of genus at least two into $\SL_n(\R)$.

To describe this more carefully, we introduce some background and terminology.  For fixed $g\geqslant 2$,  
and base field $k$, the set of representations of the surface group $\pi_1(\Sigma_g)$ to $\SL_n(k)$ is denoted by $\Hom(\pi_1(\Sigma_g), \SL_n(k))$ and is naturally an affine subvariety
of $k^{2g\cdot n^2}$ known as the {\em representation variety}. In the case of $k=\R$, those
representations of interest to us, the Hitchin representations, are of particular geometric importance and can be
defined as follows.

The {\em Teichm\"uller representations} in $\Hom(\pi_1(\Sigma_g), \SL_n(\R))$
are those obtained by composing any faithful and discrete representation
$\pi_1(\Sigma_g)\to\SL_n(\R)$ with an irreducible representation
$\SL_2(\R)\to\SL_n(\R)$. The Hitchin representations are those that lie in the same  
(topological) connected component of $\Hom(\pi_1(\Sigma_g),\SL_n(\R))$ as a Teichm\"uller representation. 
Note that, depending on the parity of $n$, there may be more than one such component, but we simply choose one and denote it by $\HIT_n$.
\footnote{We note that a {\em Hitchin component} more usually refers to a connected component
of the {\em character variety} $X(\pi_1(\Sigma_g),\SL_n(\R)))$ and the notation $\Hit_n$ is frequently used, but in this note
it will be technically simpler to work at the level of representations.}


We say that a representation is strongly dense if its image is a strongly dense subgroup of $\SL_n(\R)$, and we say that
a subset of $\Hom(\pi_1(\Sigma_g),\SL_n(\R))$ is {\em generic} if its complement consists of a countable union of proper subvarieties. The main result of this note is:
\begin{theorem}\label{thm:Generic-Hitchin-SD}
  Let $g\geqslant 2$ and $n\geqslant 3$. Then the set of strongly dense
  representations of $\pi_1(\Sigma_g)$ is generic in $\HIT_n$.
\end{theorem}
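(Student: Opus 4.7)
\medskip

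\noindent\textbf{Proof plan.} The strategy has two ingredients, which we then combine. The first ingredient, as advertised, is to establish that $\HIT_n$ is Zariski-dense in the affine variety $\Hom(\pi_1(\Sigma_g),\SL_n(\R))$. For this I would appeal to the theorem of Rapinchuk, Benyash-Krivetz and Chernousov asserting that $\Hom(\pi_1(\Sigma_g),\SL_n(\R))$ is irreducible. Since $\HIT_n$ is a non-empty subset open in the classical topology of this irreducible variety, it is automatically Zariski-dense. Consequently, if $\mathcal{B}$ is any countable union of proper Zariski-closed subvarieties of $\Hom(\pi_1(\Sigma_g),\SL_n(\R))$, then $\mathcal{B}\cap\HIT_n$ is covered by a countable union of proper subvarieties of $\HIT_n$, hence its complement is generic in $\HIT_n$.

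The second ingredient is a description of the ``bad set'' in $\SL_n(\R)^2$. Extracted from the Breuillard--Green--Guralnick--Tao proof of their Theorem~4.5 is the statement that the set
\[
  \mathcal{B}_0 \,=\, \bigl\{(A,B)\in\SL_n(\R)^2 \,:\, \langle A,B\rangle \text{ is not Zariski-dense in }\SL_n(\R)\bigr\}
\]
is contained in a countable union of proper closed subvarieties of $\SL_n(\R)^2$, indexed by the (countably many) algebraic-subgroup schemes that can arise as the Zariski closure of a two-generator subgroup. Given this, for every pair of non-commuting $\gamma_1,\gamma_2\in\pi_1(\Sigma_g)$ I would consider the evaluation morphism
\[
  \mathrm{ev}_{\gamma_1,\gamma_2}\colon \Hom(\pi_1(\Sigma_g),\SL_n(\R))\longrightarrow\SL_n(\R)^2,\qquad \rho\longmapsto(\rho(\gamma_1),\rho(\gamma_2)),
\]
and pull back the BGGT cover to obtain a countable collection of closed subvarieties of $\Hom(\pi_1(\Sigma_g),\SL_n(\R))$ whose union contains all $\rho$ where $\rho(\gamma_1),\rho(\gamma_2)$ fail to Zariski-generate.

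The main obstacle is to show that each of these pullbacks is a \emph{proper} subvariety of $\Hom(\pi_1(\Sigma_g),\SL_n(\R))$; by irreducibility this is equivalent to showing that $\mathrm{ev}_{\gamma_1,\gamma_2}$ is dominant, or already to exhibiting a single representation $\rho$ with $\langle\rho(\gamma_1),\rho(\gamma_2)\rangle$ Zariski-dense. Here I would use two facts: (i) two non-commuting elements of the surface group $\pi_1(\Sigma_g)$ always generate a rank-two free subgroup (standard for hyperbolic surface groups), so the restriction factors $\Hom(\pi_1(\Sigma_g),\SL_n(\R))\to\Hom(F_2,\SL_n(\R))=\SL_n(\R)^2$ via the inclusion $F_2\hookrightarrow\pi_1(\Sigma_g)$ sending free generators to $\gamma_1,\gamma_2$; and (ii) one can extend a prescribed pair $(A,B)\in\SL_n(\R)^2$ along this inclusion using the commutator-surjectivity theorem of Got\^o for connected semisimple Lie groups, adjusting the values of the remaining generators so that the long commutator relation is satisfied (one reduces to expressing a single element as a commutator, and fills in the rest trivially). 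Applied to a strongly dense pair $(A,B)$ provided by BGGT, this yields a representation that witnesses non-containment in any of the bad subvarieties.

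Once these pieces are in place, the proof concludes by taking the union of the bad loci over the countably many non-commuting pairs $(\gamma_1,\gamma_2)\in\pi_1(\Sigma_g)\times\pi_1(\Sigma_g)$, which is again a countable union of proper subvarieties of $\Hom(\pi_1(\Sigma_g),\SL_n(\R))$. Intersecting with $\HIT_n$ and invoking the Zariski-density from the first ingredient, one obtains that the set of strongly dense representations in $\HIT_n$ is generic. The hardest step in practice is likely the dominance/extension argument, and in particular setting up a uniform extension mechanism that works for all non-commuting pairs rather than only standard generators of the surface group.
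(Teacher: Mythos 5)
Your overall architecture (bad loci as pullbacks of a ``bad'' set in $\SL_n(\R)^2$, combined with Zariski-density of $\HIT_n$) is the right idea, and your use of BGGT Theorem~4.5 and residual freeness of surface groups is exactly what is needed. But there are two genuine gaps, one in each ingredient.

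\textbf{First ingredient.} Your assertion that a non-empty classically open subset of an irreducible real algebraic variety is automatically Zariski-dense is false, and this is precisely the subtlety the paper has to confront. (Also, the Rapinchuk--Benyash-Krivetz--Chernousov theorem concerns irreducibility of $\Hom(\pi_1(\Sigma_g),\SL_n(\C))$, not of the real points.) Over $\R$, an irreducible variety can have classically open pieces that are contained in lower-dimensional subvarieties. The standard example is Whitney's umbrella $\{x^2 = y^2 z\}\subset\R^3$: the cubic $x^2-y^2z$ is irreducible, yet the ``handle'' $\{x=y=0,\,z<0\}$ is open in the real zero set (since $x^2\geqslant 0\geqslant y^2z$ forces $x=y=0$ near such points) and is obviously not Zariski-dense. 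What the paper actually proves (Lemma~2.1) is that a classically open subset of $V(\R)$ is Zariski-dense \emph{provided it contains a smooth point of $V(\C)$}. Establishing that $\HIT_n$ consists of smooth points is where the real work is: one invokes Goldman's formula that the Zariski tangent space at $\rho$ has dimension $(2g-1)(n^2-1)+\dim\zeta(\rho(\pi_1\Sigma_g))$, and then uses Labourie's results (irreducibility of Hitchin representations and loxodromicity of every $\rho(\gamma)$) to show the centralizer is just $\{\pm I\}$. Your argument skips all of this and would not survive the umbrella-type counterexample.

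\textbf{Second ingredient.} Your extension argument (embed $F_2\hookrightarrow\pi_1(\Sigma_g)$ via $\gamma_1,\gamma_2$, then extend a strongly dense pair $(A,B)$ to a representation of $\pi_1(\Sigma_g)$ using Got\^{o}'s commutator-surjectivity) is not correct as stated. For arbitrary non-commuting $\gamma_1,\gamma_2$ there is no reason they belong to a standard generating set where ``fill in the rest trivially'' applies, and extending a representation of a subgroup to the ambient surface group is in general impossible. The paper sidesteps this entirely by going in the opposite direction: since $\pi_1(\Sigma_g)$ is residually free (Baumslag), there is a surjection $\varphi\colon\pi_1(\Sigma_g)\to F_2$ with $\varphi([a,b])\neq 1$; precomposing a strongly dense $\rho_0\colon F_2\to\SL_n(\R)$ gives a point of $\mathrm{Good}(a,b)$ with no extension problem at all. (Relatedly, you don't need the ``countable union indexed by subgroup schemes'' decomposition of the bad set in $\SL_n(\R)^2$; BGGT Theorem~4.1 already gives that $\mathrm{Bad}(a,b)$ is Zariski-closed, so a single non-example suffices for properness.)

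In short, the skeleton matches the paper's, but both of your load-bearing steps --- Zariski-density of $\HIT_n$ and non-emptiness of $\mathrm{Good}(a,b)$ --- need to be replaced by the smooth-point lemma plus Goldman/Labourie, and by residual freeness respectively.
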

It is known that all the representations in $\HIT_n$ are faithful and discrete
(see \cite[Theorem 1.5]{Labourie}), 
so this provides the representations promised in the
first paragraph. We note that 
the result  of Theorem \ref{thm:Generic-Hitchin-SD} was obtained recently in ~\cite{LongReidSD} in the
case of $n=3$ by direct geometric methods.\\[\baselineskip]
To prove Theorem \ref{thm:Generic-Hitchin-SD} we prove the following result, which seems independently interesting, and uses a result of Rapinchuk,
Benyash-Krivetz and Chernousov \cite{Rapinchuk-al}, that $\Hom(\pi_1(\Sigma_g), \SL_n(\C))$ is an irreducible subvariety of $\C^{2g\cdot n^2}$; in fact, it is Zariski and classically connected.
\begin{theorem}\label{thm:Hitchin-Z-dense}
  For all $n\geqslant 2$, the set $\HIT_n$ is Zariski-dense in  the affine algebraic set $\Hom(\pi_1(\Sigma_g), \SL_n(\C))$.
\end{theorem}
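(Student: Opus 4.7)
Let $V = \Hom(\pi_1(\Sigma_g), \SL_n(\C))$. The plan is to exploit the Rapinchuk et al.\ irreducibility of $V$ by reducing Zariski density of $\HIT_n$ to a single smoothness claim: that $\HIT_n$ contains a smooth point of $V$ as a complex algebraic variety. Note that $V$ is defined over $\R$, its real locus is $V(\R) = \Hom(\pi_1(\Sigma_g), \SL_n(\R))$, and $\HIT_n$ is by definition a classical-topology connected component of $V(\R)$, hence open in $V(\R)$.

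The first step is to verify that every Hitchin representation $\rho$ is a smooth point of the complex variety $V$. The Zariski tangent space to $V$ at $\rho$ is the cocycle space $Z^1(\pi_1(\Sigma_g), \mathfrak{sl}_n)$ with the adjoint action. A standard computation combining the Euler characteristic of $\Sigma_g$ with Poincar\'e duality for surface groups yields $\dim Z^1(\rho) = (2g-1)(n^2-1) + \dim H^0(\pi_1(\Sigma_g), \mathfrak{sl}_n)$. Hitchin representations have Zariski-dense image in $\SL_n(\R)$, so they are absolutely irreducible and the invariants $H^0$ vanish. Thus the tangent space has exactly the expected complex dimension $(2g-1)(n^2-1)$ of $V$ at an irreducible point, so $\rho$ is smooth.

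The second step is an identity-principle argument. Let $f$ be a regular function on $V$ vanishing on $\HIT_n$, and fix $\rho \in \HIT_n$. Since $\rho$ is a smooth real point of $V$, I can choose local analytic coordinates $(z_1,\ldots,z_d)$ on a classical-topology neighborhood of $\rho$ in $V$, with $d = (2g-1)(n^2-1)$, that are real-valued on $V(\R)$ and identify $V(\R)$ locally with $\R^d \subset \C^d$. Openness of $\HIT_n$ in $V(\R)$ then forces $f$, expressed in these coordinates, to be a holomorphic function on a polydisk vanishing on $\R^d$; by the identity principle, $f$ vanishes on a classical-topology neighborhood of $\rho$ in $V$. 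Since $V$ is irreducible, this forces $f \equiv 0$ on $V$.

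The main obstacle is the smoothness verification: a priori, Hitchin representations are only known to be smooth points of the \emph{real} representation variety, but one needs smoothness of the \emph{complex} one. Happily this is automatic, because the Zariski tangent space and the local dimension are algebraic invariants that behave identically over $\R$ and $\C$, and the vanishing of $H^0$ (needed for the dimension count to match) is guaranteed by Zariski-density of the image in $\SL_n(\R)$. Apart from this, the argument is a routine combination of the cited irreducibility with the standard fact that a real analytic submanifold of maximal real dimension through a smooth point of a complex algebraic variety is Zariski-dense in it.
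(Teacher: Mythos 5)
Your overall strategy is the same as the paper's: invoke the Rapinchuk--Benyash-Krivetz--Chernousov irreducibility of $\Hom(\pi_1(\Sigma_g),\SL_n(\C))$, show that Hitchin representations are smooth points of this complex variety by computing the Zariski tangent space, and then run a local holomorphic identity-principle argument (which the paper isolates as its Lemma~\ref{lem:MerciJulien}) to upgrade openness in $V(\R)$ plus smoothness to Zariski density in $V(\C)$. The identity-principle step and the dimension count via $\dim Z^1 = (2g-1)(n^2-1) + \dim H^0$ are exactly the paper's (the $\dim H^0$ term being $\dim \zeta(\rho(\pi_1\Sigma_g))$ in Goldman's formulation), so on those points you have reconstructed the argument faithfully.

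There is, however, a genuine gap in your smoothness verification. You justify $H^0 = 0$ by asserting that ``Hitchin representations have Zariski-dense image in $\SL_n(\R)$, so they are absolutely irreducible.'' That assertion is false in general: the Teichm\"uller (Fuchsian) representations in $\HIT_n$ have image contained in the principal $\SL_2(\R) \subset \SL_n(\R)$, which is a proper algebraic subgroup for $n \geqslant 3$, so they are certainly not Zariski dense. Indeed the whole point of the main theorem is that Zariski density (even of two-generator subgroups) is only generic, not universal, on $\HIT_n$; building it in as a hypothesis here would be circular. What is true, and what the paper uses, is Labourie's theorem that every Hitchin representation is irreducible and that each nontrivial $\rho(\gamma)$ is diagonalizable with distinct real eigenvalues. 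From those two facts the paper shows directly that anything in the centralizer of $\rho(\pi_1\Sigma_g)$ must be diagonal (by the eigenvalue condition) and then scalar (by irreducibility), hence $\zeta(\rho(\pi_1\Sigma_g)) = \{\pm I\}$ and $H^0$ vanishes. Note that irreducibility over $\R$ alone would not suffice --- Schur's lemma over $\R$ only forces the commutant to be a division algebra over $\R$ --- so the distinct-eigenvalue input is genuinely needed; your appeal to Zariski density was doing this work illegitimately. Replacing that sentence with Labourie's two facts and the centralizer computation closes the gap and recovers the paper's proof.
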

\noindent The case $n=2$ was already essentially observed in \cite[Chapter 3]{Goldman80}.\\[\baselineskip]
As we describe below, Theorem \ref{thm:Generic-Hitchin-SD}  follows from Theorem \ref{thm:Hitchin-Z-dense} together with \cite{BGGT} and the fact that surface groups are residually free \cite{Baumslag}. \\[\baselineskip]
{\bf Acknowledgements}.  The authors wish to thank Bill Goldman,
Eran Iton, Fanny Kassel, Julien March\'e, Andr\'es Sambarino, 
and Nicolas Tholozan for encouragement and helpful conversations. The second author gratefully acknowledges the financial support of the N.S.F.  and the Max-Planck-Institut f\"ur Mathematik, Bonn, for its financial support and hospitality during the preparation of this work.

\section{Proofs.}

\noindent The proof of Theorem~\ref{thm:Hitchin-Z-dense} rests on the following observation:

\medskip

\noindent {\em Suppose that $V$ is an irreducible complex affine variety defined by
real polynomials. If $V$ has a smooth real point, then $V(\R)$ is Zariski dense in $V$.}

\medskip

Although apparently well-known and natural, we could not locate a proof in the literature, and so we decided to include a proof for completeness.

\begin{lemma}\label{lem:MerciJulien}
  Let $N\geqslant 1$; let $V(\C) \subset \C^N$ be an irreducible
  affine variety, defined over $\R$, of dimension $n$.
  Let $H$ be an open subset of $V(\R)$,
  for the usual, Hausdorff topology of $\R^N$,
  and suppose $H$ contains a smooth point $x_0$ of $V(\C)$.
  Then $H$ is Zariski-dense in $V(\R)$, for its structure as a
  real affine variety in $\R^N$.
\end{lemma}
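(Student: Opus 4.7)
The plan is to show that any real polynomial $p \in \R[x_1,\ldots,x_N]$ vanishing on $H$ must vanish on all of $V(\R)$; since $V(\R)$ is itself Zariski-closed in $\R^N$, this establishes that the Zariski closure of $H$ in $\R^N$ equals $V(\R)$, which is precisely the claim. The strategy is local: use the smooth real point $x_0$ and the real/complex implicit function theorems to transfer the vanishing of $p$ from $V(\R)$ across to $V(\C)$, and then invoke the irreducibility of $V(\C)$ to globalise.

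Because $V$ is defined over $\R$, its vanishing ideal is generated by real polynomials $f_1,\ldots,f_k$. Smoothness of $V(\C)$ at $x_0$ of complex dimension $n$ says that the Jacobian matrix $\bigl[\partial f_i/\partial x_j(x_0)\bigr]$ has $\C$-rank $N-n$; since $x_0$ is real and the $f_i$ have real coefficients, this matrix is real and its ranks over $\R$ and $\C$ coincide. After reordering coordinates and equations, the trailing $(N-n)\times(N-n)$ submatrix is invertible. The real implicit function theorem produces real-analytic functions $g_{n+1},\ldots,g_N$ on a neighborhood $\Omega\subset\R^n$ of $(x_{0,1},\ldots,x_{0,n})$ whose graph describes $V(\R)$ in a Hausdorff neighborhood of $x_0$; these functions extend holomorphically to a complex neighborhood, and a dimension count at the smooth point shows that the corresponding complex graph is $V(\C)$ near $x_0$.

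Shrinking $H$ to a Hausdorff neighborhood of $x_0$ inside $V(\R)$, the hypothesis that $p$ vanishes on $H$ becomes the statement that the holomorphic function
\[
F(z_1,\ldots,z_n) := p\bigl(z_1,\ldots,z_n,g_{n+1}(z),\ldots,g_N(z)\bigr)
\]
vanishes on a real-open subset of $\Omega$. The identity principle for holomorphic functions of several variables then forces $F \equiv 0$ on a complex neighborhood of $(x_{0,1},\ldots,x_{0,n})$, so $p$ vanishes on a Hausdorff-open subset of $V(\C)$ at the smooth point $x_0$. Since $V(\C)$ is irreducible of complex dimension $n$, the Zariski-closed set $V(\C)\cap\{p=0\}$ either coincides with $V(\C)$ or has complex dimension strictly less than $n$; the latter is incompatible with containing a nonempty Hausdorff-open subset of a complex $n$-manifold. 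Hence $p$ vanishes on $V(\C)$, and a fortiori on $V(\R)$.

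The delicate point to handle carefully is ensuring that the same real polynomial data simultaneously parametrise $V(\R)$ and $V(\C)$ near $x_0$ (not just the real/complex analytic sets cut out by the chosen subset of the $f_i$'s). This is exactly where the smoothness of $V(\C)$ at the real point $x_0$ is essential, via the analytic irreducibility at smooth points and a dimension count; once that bookkeeping is in place, the proof reduces to the identity theorem followed by an irreducibility argument.
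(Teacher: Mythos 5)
Your proof follows essentially the same route as the paper's: locate $x_0$, use the implicit/inverse function theorem to write $V$ locally as a graph of (real-)analytic functions over $\C^n$ (the paper runs the complex inverse function theorem and observes its restriction to the reals solves the real problem; you run the real implicit function theorem and extend holomorphically, which amounts to the same thing), then apply the identity theorem to the composite $p(z,g(z))$. You are in fact a bit more explicit than the paper on two points: the final globalization step (passing from ``$p$ vanishes on a Hausdorff-open patch of $V(\C)$'' to ``$p$ vanishes on $V(\C)$'' via irreducibility and a dimension count), which the paper leaves implicit after establishing vanishing on the graph over $U$; and the bookkeeping issue that the chosen $N-n$ equations cut out $V$ only locally, which the paper glosses over by positing a defining map $F\colon\C^N\to\C^{N-n}$. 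Both additions are correct and, if anything, tighten the argument.
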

\begin{proof}
  In fact we will prove that $H$ is Zariski-dense in $V(\C)$:
  any (complex) polynomial function on $\C^N$ vanishing
  identically on $H$, vanishes on $V(\C)$. By restricting
  to real polynomials, this implies the lemma.
  
  Let $F\colon\C^N\to\C^{N-n}$ be a polynomial map, with real
  coefficients, defining $V$. Since $x_0$ is a smooth point
  of $V(\C)$, or equivalently, of $V(\R)$, the Jacobian
  matrix $JF(x_0)$ has rank $N-n$. Up to permuting the
  coordinates, we may suppose that the minor of $JF(x_0)$
  corresponding to the $N-n$ last coordinates is invertible.
  Then the inverse function theorem yields a
  neighborhood $U\times V$ of $x_0$, and a map
  $\phi\colon U\to V$ with $U\subset \C^n$ and $V\subset \C^{N-n}$,
  such that $V(\C)\cap (U\times V)=F^{-1}(\{0\})\cap (U\times V)$
  is the graph of $\phi$.
  
  This map $\phi$ is obtained as the limit of an explicit fixed point
  process which is defined only in terms of $F$: it 
  follows that $\phi$ is holomorphic, as a uniform limit of holomorphic
  maps, and that the restriction of $\phi$ to the reals is the solution
  to the same inverse function problem, hence (up to taking smaller
  neighborhoods) for all $t\in U$ we have $t\in\R^n$ if and only if
  $\phi(t)\in\R^{N-n}$. See \textsl{e.g.} \cite[Paragraph 1.3]{Narasimhan}.
  
  Now let $P\colon\C^N\to\C$ be a polynomial function vanishing
  identically on $H$. Then the map $U\to\C$, $t\mapsto P(t,\phi(t))$
  is holomorphic, and it vanishes identically on $U\cap\R^n$.
  Such a map vanishes identically on $U$: this can be checked by
  induction on $n$, where both the base step and the induction
  step use the principle of isolated zeroes of holomorphic maps
  of one variable.
\end{proof}

\begin{proof}[Proof of Theorem~\ref{thm:Hitchin-Z-dense}]
As noted in \S \ref{intro},  $R(\C) = \Hom(\pi_1(\Sigma_g), \SL_n(\C))$
  is an affine subvariety of $\C^{2g\cdot n^2}$, and it was proved in \cite[Theorem 3]{Rapinchuk-al} to be irreducible of dimension $(2g-1)(n^2-1)$.
  
  The set $\HIT_n$ is, by definition, a (topological) connected component of $X_\R$, which is a real algebraic variety, and hence
  $\HIT_n$ is open.  We claim that it contains smooth points of $R(\R)$, or equivalently, of $R(\C)$: in fact, we will show that all its points are regular.
  
  Indeed, by a result of Goldman \cite[Propositon 1.2]{Goldman84},
  at each point $\rho$ of $R(\R)$, the dimension of the
  Zariski tangent space at $\rho$ equals  $(2g-1)(n^2-1)+\dim(\zeta(\rho(\pi_1\Sigma_g)))$, where
  $\zeta(\rho(\pi_1\Sigma_g))$ is the centralizer of the image
  group $\rho(\pi_1\Sigma_g)$ in $\SL_n(\R)$. 
  
  We will make use of the following facts proved by Labourie
  (see \cite[Theorem 1.5 and Paragraph 10]{Labourie}). First,  
  if $\rho\in \HIT_n$, then $\rho$ is irreducible, and second, for all nonidentity elements
  $\gamma\in\pi_1(\Sigma_g)$,
  the matrix $\rho(\gamma)$ is diagonalizable with pairwise
  distinct real eigenvalues.
  
  Fix such a $\gamma_0$; by conjugating the image of $\rho$ in $\SL_n(\R)$, we may suppose that $\rho(\gamma_0)$ is
  diagonal. Let $\xi$ be an element of $\zeta(\rho(\pi_1(\Sigma_g)))$. Since $\xi$ commutes with $\rho(\gamma_0)$, it is also
  diagonal, and if $\lambda$ is an eigenvalue of $\xi$,
  the matrix $\xi-\lambda I$ also commutes with  $\rho(\pi_1(\Sigma_g))$. Hence $\ker(\xi-\lambda I)$ is
  invariant by $\rho(\pi_1(\Sigma_g))$. However, $\rho$ is irreducible, and so
  this implies that $\xi$ is a scalar matrix, that is to say,  $\xi=\pm I$.
 
  Thus, the Zariski tangent space
  at any representation $\rho\in\HIT_n$ has minimal dimension,
  $(2g-1)(n^2-1)$, in other words, these are regular points of
  the varieties~$R(\R)$ and~$R(\C)$.
  
  The result now follows from Lemma~\ref{lem:MerciJulien}.
\end{proof}

\begin{proof}[Proof of Theorem \ref{thm:Generic-Hitchin-SD}]
  For every pair of non commuting elements $a, b\in \pi_1(\Sigma_g)$, let $\mathrm{Bad}(a,b)$
  denote the subset of $\Hom(\pi_1(\Sigma_g), \SL_n(\R))$
  consisting of representations $\rho$ such that $\rho(a)$ and
  $\rho(b)$ do not generate a Zariski-dense subgroup of $\SL_n(\R)$,
  and let $\mathrm{Good}(a,b)$ denote its complement.
  
  The proof will be complete once we know that for every pair of
  non commuting elements $a, b\in \pi_1(\Sigma_g)$, the set
  $\mathrm{Bad}(a,b)\cap \HIT_n$ is Zariski-closed, and that it is
  a proper subset of~$\HIT_n$.
  
  The fact that the sets $\mathrm{Bad}(a,b)$ are Zariski-closed follows
  from \cite[Theorem 4.1]{BGGT}.
  
  Now let us check that $\mathrm{Bad}(a,b)\cap \HIT_n$ is a proper
  subset of $\HIT_n$, or equivalently, that $\mathrm{Good}(a,b)\cap \HIT_n$
  is nonempty. Since $\mathrm{Good}(a,b)$ is Zariski-open, and since
  $\HIT_n$ is Zariski-dense in $\Hom(\pi_1(\Sigma_g), \SL_n(\R))$
  by Theorem~\ref{thm:Hitchin-Z-dense}, it suffices to check that
  $\mathrm{Good}(a,b)$ is nonempty.
  
  By \cite[Theorem 4.5]{BGGT}, there exists a strongly dense representation
  $\rho_0\colon F_2\to\SL_n(\R)$.
  Let $a, b\in \pi_1(\Sigma_g)$ be a pair of non commuting elements.
  Since $\pi_1(\Sigma_g)$ is
  residually free (see Baumslag~\cite{Baumslag}) and $[a,b]\neq 1$,
  there exists a surjective morphism $\psi$ from $\pi_1(\Sigma_g)$ onto a free
  group $F$, such that $\psi([a,b])\neq 1$. By composing $\psi$ with an injective
  morphism $F\to F_2$, this yields a morphism
  $\varphi\colon\pi_1(\Sigma_g)\to F_2$ such that $\varphi([a,b])\neq 1$.
  Thus, $\varphi(a)$ and $\varphi(b)$ do not commute, hence
  $\rho_0(\varphi(a))$ and $\rho_0(\varphi(b))$
  generate a Zariski dense subgroup of $\SL_n(\R)$.
  In other words, $\rho_0\circ\varphi$ lies in $\mathrm{Good}(a,b)$, so this
  set is non empty.
\end{proof}

\bibliographystyle{plain}

\bibliography{Biblio}

\begin{thebibliography}{1}

\bibitem{Baumslag}
Gilbert Baumslag.
\newblock On generalised free products.
\newblock {\em Math. Z.}, 78:423--438, 1962.

\bibitem{BGGT}
Emmanuel Breuillard, Ben Green, Robert Guralnick, and Terence Tao.
\newblock Strongly dense free subgroups of semisimple algebraic groups.
\newblock {\em Israel J. Math.}, 192(1):347--379, 2012.

\bibitem{Goldman80}
William~M. Goldman.
\newblock {\em Disontinuous groups and the Euler class}.
\newblock PhD thesis, University of California, Berkeley, 1980.

\bibitem{Goldman84}
William~M. Goldman.
\newblock The symplectic nature of fundamental groups of surfaces.
\newblock {\em Adv. in Math.}, 54(2):200--225, 1984.

\bibitem{Labourie}
Fran\c{c}ois Labourie.
\newblock Anosov flows, surface groups and curves in projective space.
\newblock {\em Invent. Math.}, 165(1):51--114, 2006.

\bibitem{LongReidSD}
D.~D. Long and A.~W. Reid.
\newblock Strongly dense representations of surface groups, 2021.
\newblock Preprint.

\bibitem{Narasimhan}
R.~Narasimhan.
\newblock {\em Analysis on real and complex manifolds}, volume~35 of {\em
  North-Holland Mathematical Library}.
\newblock North-Holland Publishing Co., Amsterdam, 1985.
\newblock Reprint of the 1973 edition.

\bibitem{Rapinchuk-al}
A.~S. Rapinchuk, V.~V. Benyash-Krivetz, and V.~I. Chernousov.
\newblock Representation varieties of the fundamental groups of compact
  orientable surfaces.
\newblock {\em Israel J. Math.}, 93:29--71, 1996.

\end{thebibliography}

\end{document}